\theoremstyle{plain}
\newtheorem{thm}{Theorem}[section]
\theoremstyle{definition}
\theoremstyle{plain}
\newtheorem{lem}{Lemma}[section]
\theoremstyle{plain}
\newtheorem*{nclem}{The $C^r$ Closing Lemma}
\theoremstyle{plain}
\newtheorem*{conlem}{The $C^r$ Connecting Lemma}
\theoremstyle{plain}
\newtheorem*{hypthm}{Hyperbolicity Theorem}
\theoremstyle{plain}
\theoremstyle{plain}
\newtheorem{cor}{Corollary}[section]
\theoremstyle{plain}
\theoremstyle{plain}
\newtheorem{thm1}{Theorem}
\theoremstyle{plain}
\newtheorem{defis}{Definition}[section]
\theoremstyle{plain}
\newtheorem*{htl}{Hyperbolic tendency lemma}
\theoremstyle{plain}
\newcommand\SVF{\mathfrak{X}(M)}
\newcommand\NW{\Omega}
\newcommand\NB{\mathcal{U}}
\newcommand\LIE{\mathcal{L}}
\newcommand\CD{\nabla}
\newcommand\oY{\overline{Y}}
\newcommand\oX{\overline{X}}
\newcommand\orho{\overline{\rho}}
\newcommand\mT{\mathcal{T}}
\newcommand\mP{\mathcal{P}}
\newcommand\mI{\mathcal{I}}
\newcommand\mO{\mathcal{O}}
\DeclareMathOperator{\dis}{dist}
\numberwithin{equation}{section}
\begin{document}

\title{a proof of the $C^r$ closing lemma and stability conjecture}
\author{Chang Gao}
\begin{abstract}
It has long been conjectured that generic dynamical systems has finite periodic orbits, ever since the time of Poincar\'e. In this article, a perturbation method is proposed for the $C^r$ closing of periodic orbits. This method is applicable to both time-varying and time-invariant vector fields. 
\end{abstract}
\maketitle

\section{Introduction}
Let $\SVF:M\to TM$ be the set of smooth vector fields endowed with $C^r$ topology on a locally compact Riemannian manifold $M$, as originally defined in \cite{AnPo1937}: a flow $\phi'_0=X\in\SVF$ is $C^r$ structurally stable if there is a neighborhood $\NB(X)\subset\SVF$ and a homeomorphism for each $\psi'_0=Y\in\mathcal{U}(X)$ so that $\phi_t\circ h=h\circ\psi_t$.\par
The genericity of structurally stable systems in the set of all differential equations on two dimensional closed manifolds has been established. In higher dimensions, counter examples were constructed \cite{Sm1966}. Yet, the stability conjecture serves as a link between the global long-time behaviour and local hyperbolicity of the nonwandering set of a differential system. It states that \cite{PaSm1970}: the necessary and sufficient condition for a flow $\phi_t$ to be $C^r$ structurally stable is\par
 \romannumeral 1) $\Omega(\phi_t)=\overline{P(\phi_t)}$\par
 \romannumeral 2) $\Omega(\phi_t)$ is hyperbolic\par
 \romannumeral 3) $W^s(x)$ and $W^u(y)$ intersect transversally for all $x,y\in\Omega(\phi_t)$.\par
 Systems which satisfy condition \romannumeral 1) and \romannumeral 2) are called Axiom A systems. For example, evidence shows that the strange attractor in Lorenz system contains an Axiom A subset \cite{Tu2002}. The basic idea behind constructing geometric chaotic system is that when invariant manifolds of periodic orbits accumulate on other invariant manifolds and form infinite transversal intersections for the Poincar\'e map, iterations of this accumulation procedure yield very complicated dynamical systems. This is the idea behind Smale's horseshoe map, for a thorough introduction on the geometric theory of Axiom A systems see \cite{Sm1967}. \par 
The sufficiency of the stability conjecture was proved in \cite{Ro1971}\cite{Rs1976} for $r\geq1$, necessity was solved in the $2$-dimensional case \cite{Pe1962}. In higher dimensions than $2$ it was reduced to prove that $C^r$ structural stability implies conditions \romannumeral 1),\romannumeral 2) \cite{Rs1973}. Then the conjecture was proved for $r=1$ in \cite{Ma1987}\cite{Ha1997} for diffeomorphisms and flows respectively.  In this paper it is assumed that all the eigenspace of $\Omega(\phi_t)$ is contained in $M$, otherwise they do not contribute to the global topological structure of $\phi_t$ in $M$, as this example shows \cite{LaPa1986}. We also make the following assumptions, that $M$ is a locally compact smooth manifold with a positive definite norm on the tangent bundle, and that the nonwandering set $\Omega(\phi_t)$ is bounded. This is the case generalized from previously cited results on closed manifolds. The main result states
\begin{thm1}\label{sc}
If a flow $\phi_t$ is $C^r$ structurally stable, then it satisfies condition \romannumeral 1) and \romannumeral 2).
\end{thm1} 

 Before giving the main theorems and proofs, let's review some definitions. One could find these definitions in expository text such as \cite{Gu1983}. \par
$P(\phi_t)$ denotes the set of periodic points $\{x\in M|\phi_T(x)=x\}$ where $T$ is the period of $x$. $\overline{P(\phi_t)}$ the closure of $P(\phi_t)$ in $M$.\par
 The nonwandering set $\Omega(\phi_t)$ is the set of $x\in\Omega(\phi_t)$ such that for every neighborhood $U$ of $x$ and $T>0$, $\phi_t(U)\cap U\neq\emptyset$ for some $t>T$. This definition differs slightly from the nonwandering points of a diffeomorphism, but doesn't affect the results extended from those of flows.\par
 A point $x\in\Omega(\phi_t)$ is hyperbolic if its tangent space has an invariant hyperbolic splitting, i.e. $TM_x=E^s\oplus E^u\oplus X$ and $d\phi_t|E^s\subset E^s$ such that 
$$\|d\phi_t|E^s(x)\|\leq C\lambda^t\quad t\geq0$$
$$\|d\phi_{-t}|E^u(x)\|\leq C\lambda^t\quad t\geq0$$
for some constants $C>0$, $0<\lambda<1$. 
 $$W^s(x)=\{y\in M|\lim_{t\to\infty}\|\phi_t(x)-\phi_t(y)\|=0\}$$
 $$W^u(x)=\{y\in M|\lim_{t\to\infty}\|\phi_{-t}(x)-\phi_{-t}(y)\|=0\}$$
are the stable and unstable manifolds of a point $x$, respectively. \par
Two fields $X$, $Y\in\SVF$ are $C^r$ close if for any $\alpha>0$, their Lie derivatives satisfy $\max\{\|\LIE^q_{h_q}X(x)-\LIE^q_{h_q}Y(x)\|_m|\|h_q\|_m=1, q=0,1,\dots,r\}\leq\alpha$, where $\LIE^q_{h_q}=\LIE_{h_q}\LIE_{h_{q-1}}\dots\LIE_{h_1}$, $h_q(x)\in T^{q+1}M=TM$, $\|*\|_m$ denotes the norm defined on the tangent space $TM$. In Euclidean space $\mathbb{R}$ this definition is equivalent to $\|D^r\|$. 

\section{Main Theorems and Outline of the Proof}\label{out}
A key theorem to the stability conjecture is the $C^r$ closing lemma, the proof of the lemma itself for $r\geq2$ was considered a very difficult problem that it was listed in \cite{Sm1999}, see \cite{An2012} for a general review. This problem was proved in \cite{Pu1967}\cite{Pu1967im} for $C^1$ small perturbations, there is also a proof of $C^\infty$ closing lemma for 3-dimensional Hamiltonian flows \cite{As2016}. \par
Major obstruction in extending the result from $C^1$ to $C^r$ case for $r\geq2$ lies with the fact that the $C^r$-size of the perturbation grows reciprocally to $\epsilon^r$ as $r$ increases, where $\epsilon$ is the size of perturbed area, or size of the flow box. Another obstruction which was solved in the $C^1$ case is that perturbing the vector field could destroy previous return points of the orbit. To solve these problems we propose making the perturbation within a 'long flow box' (definition \ref{lfb}), surrounding \textit{a priori} constructed periodic orbit. The perturbation can be constructed by integrating the linearization of the original vector field $dX$ along geodesics to points on the cross-section, where the initial vector lies on the  \textit{a priori} constructed periodic orbit. This turns out to be equivalent to parallel transporting the vector, see the expression of the perturbation in the theorem below
\begin{nclem}
If a flow $\phi_t$ induced by a vector field $X\in\SVF$ has a nonperiodic nonwandering point $\omega$, then for any neighborhood $\NB(X)\subset\SVF$, there is a flow $\psi_t$ induced by vector field $Y\in\NB(X)$, so that $p$ is periodic for $\psi_t$ and is arbitrarily close to $\omega$. The time-dependent perturbation between $X$ and $Y$ satisfies
\[
Y(x,t)-X(x,t)=\rho(\dis(\pi_J(x),x)+L|t-t_x|)(\oY(x)-\oX(x))
\]
where $\oY$ and $\oX$ are parallel transports along geodesics normal to $\psi_t(p)$.
\end{nclem}
Notice that the above perturbation is nonautonomous, which was set up to solve the nonautonomous closing lemma. But the same idea could be applied to the autonomous case, by superpositioning the parallel transport of different sections of the periodic orbit, this perturbation takes the form (\ref{hom20}) with the time dependent term removed. I refer the reader to (\ref{clm31}) and theorem \ref{clm1} for detailed proof. 

 Using similar construction of $C^r$ small perturbation as in the closing lemma, one obtains the $C^r$ connecting lemma (theorem \ref{con1}), which was proved in \cite{Ha1997} for $r=1$, see also \cite{Wen1995} for its implication on the $C^1$ stability conjecture.
\begin{conlem}
If $X\in\SVF$ is $C^r$ structurally stable, then no almost periodic point is close to equilibrium point.
\end{conlem}
See definition \ref{aml} for almost periodic point. Since $\phi_t$ is structurally stable hence homeomorphic to $\psi_t$, we prove that periodic points are dense in the set of nonwandering points $\NW(X)$ in corollary \ref{clm2}, i.e., necessity of condition \romannumeral 1) of the stability conjecture. \par
A key lemma to the necessity of condition \romannumeral 2) is proved in lemma \ref{hyp0}, this lemma proves the hyperbolic tendency of $C^r$ structurally stable periodic points. The $C^1$ case was proved by \cite{Fr1971}. The obstruction in this lemma mentioned in the introduction of \cite{Ma1982} I believe to be that perturbing the eigenvalues of vector field could change the return time of its flow. We overcome this problem by making a cross-section preserving homeomorphic perturbation, so it preserves the return time function $T(p,\psi)$ as well. The lemma states as follows
\begin{htl}
If $X$ is $C^r$ structurally stable, $P(X)$ is hyperbolic, and its eigenvalues cannot tend to absolute value $1$.
\end{htl}
See lemma \ref{hyp0}. Intuitively the eigenvalues hence hyperbolicity of nonwandering set can be approximated by eigenvalues of dense periodic points. An obstruction to this approach is that the periodic orbits wander off the nonwandering orbit when the period tends to infinity. Suppose that the sequence of minimum periods is unbounded, i.e. $\lim_{i\to\infty}T(p_i)\to\infty$. We proved in the $C^r$ connecting lemma that if $X$ is structurally stable, then $X(x)\neq0$ for all $x\in\Omega(X)$. This leads to the case that $\phi_t(p_i)$ is dense somewhere in $\Omega(X)$. Since each $p_i$ is hyperbolic, there is a neighborhood of $p_i$ where the first return time $T(x)$ exist and is smooth. Since $\Omega(X)$ is bounded, by lemma \ref{nonauto} there are times $0\leq T_0<T_1<\infty$ where $\phi_{T_1(x)}(p_i)$ return arbitrarily $C^r$ close to $\phi_{T_0(x)}(p_i)$. Suppose $d\phi_{T_1-T_0}$ has eigenvalue arbitrarily close to $1$, the $C^r$ closing lemma could be applied to create periodic points with eigenvalue arbitrarily close to $1$. Another perturbation would create nonhyperbolic periodic points, contradicting structural stability. Thus we have proved that every periodic point of a structurally stable vector field with bounded nonwandering set has finite hyperbolic sub-period. Finally, we approximate the eigenvalues of nonwandering points by eigenvalues of periodic points.
\begin{hypthm}\label{hy}
If $X$ is $C^r$ structurally stable, $\overline{P(X)}$ is hyperbolic.
\end{hypthm} 
See theorem \ref{hyp1} for details. By density of periodic points and hyperbolicity of $\NW(X)=\overline{P(X)}$, theorem \ref{sc} is proved.

\section{Proof of the $C^r$ Closing Lemma}\label{pclm}
This section contains a proof of the $C^r$ closing lemma (theorem \ref{clm1}), necessity of condition \romannumeral 1) is proved in corollary \ref{clm2}.
Let $\omega\in\NW(\phi_t)$ be a nonwandering point of the flow $\phi_t$ induced by $X\in\SVF$, $M$ a locally compact smooth Riemannian manifold with a given connection $\CD$ compatible with metric, denote $\dis(x,y)$ the distance between $x$ and $y$ on $M$. 
\begin{defis}\label{aml}
A point $x_0\in \phi_t(U(\omega))\cap U(\omega)$ is almost periodic for $\phi_t$ if for any neighborhood $U$ of $\omega$, there is $T>0$ such that $U(\omega)\supset x_0'=\phi_{T}(x_0)$.
\end{defis}
If $x_0$, $x_1$, \dots , $x_{n+1}=x_0$ $\in U(\omega)$ is a sequence of almost periodic points associated to the nonwandering point $\omega$, denote their first return points by $x_0'=\phi_{t_1}(x_0,0)$, $x_1'=\phi_{t_2-t_1}(x_1,t_1)$, \dots , $x_n'=\phi_{t_{n+1}-t_n}(x_n,t_n)$ $\in U(\omega)$, $T=t_{n+1}$ the period of $x_0$. In the remaining part of this paper, $\phi_t(x)$ will be short for $\phi_t(x,t_0)$ or $\phi_{t\bmod T}(x)$ whenever it appears.\par
The following lemma ensures that in case of nonautonomous systems, there are almost periodic points in $U(\omega)$ where the vector field returns $C^r$ close. 
\begin{lem}\label{nonauto}
For every almost periodic point $x_0\in U(\omega)$ and any $\alpha>0$, there exists a sequence of almost periodic points $x_i\in U(\omega)$, $i=0,1,2,\dots ,n$ and a geodesic $\gamma_{s_i}(x_i')=\gamma_{i'}=x_0$ for each $i$, such that:
\begin{equation}\label{na1}
\|\LIE_{h_q}^q\oX(\gamma_{i+1},t_{i+1})-\LIE_{h_q}^q\oX(\gamma_{i'},t_{i+1})\|<bL^r\alpha
\end{equation}
for all $q=0,1,\dots,r$ and $x_n=x_0$, where $b$ and $L$ are constants depending only on $X$, $\oX(\gamma_{i+1},t_{i+1})$ is the solution to the differential equation of parallel transport: 
\begin{equation}\label{na2}\CD_{\gamma'_{i+1}}\oX|_{\gamma_{i+1}}=0\quad\mathrm{and}\quad \oX(x_{i+1},t_{i+1})=X(x_{i+1},t_{i+1}).
\end{equation}
\end{lem}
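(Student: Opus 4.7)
The plan is a pigeonhole--compactness argument on the infinite sequence of first-return points of $x_0$, followed by a geometric estimate converting $C^0$ closeness of parallel transports into the $C^r$ bound (\ref{na1}).

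First, since $x_0$ is almost periodic and $\omega\in\NW(\phi_t)$, the forward orbit $\phi_t(x_0)$ re-enters $U(\omega)$ at an unbounded increasing sequence of times $0<t_1<t_2<\cdots$, producing an infinite family of return points $p_k:=\phi_{t_k}(x_0)\in U(\omega)$, each itself almost periodic. Shrinking $U(\omega)$ if necessary, I may assume the exponential map at $x_0$ is a diffeomorphism onto a normal neighborhood containing $U(\omega)$, so that for each $k$ the radial geodesic $\gamma_k$ from $x_0$ to $p_k$ and the parallel transport $\oX(p_k,t_k)$ along $\gamma_k$ back to $x_0$ are well-defined as in (\ref{na2}).

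Second, by boundedness of $\NW(X)$ together with $X\in\SVF$, the vector field $X$ and its Lie derivatives up to order $r$ are uniformly bounded on a compact neighborhood $K$ of $\overline{U(\omega)}$; in particular the $r$-jets of $\oX(p_k,t_k)$ at $x_0$ form a bounded sequence in a finite-dimensional fibre of the jet bundle. Applying Bolzano--Weierstrass (or simple pigeonhole) to this bounded sequence, I extract a finite subsequence $p_{k_0},p_{k_1},\ldots,p_{k_{n-1}}$ whose consecutive $r$-jets at $x_0$ differ, in every component $\LIE^q_{h_q}$ with $q\leq r$, by at most $\alpha$. Relabel $x_i:=p_{k_i}$ for $i=0,\ldots,n-1$ and take $x_n:=x_0$; to ensure the cycle closes at $x_0$ with no extra loss, the last selected return point is chosen sufficiently near $x_0$, which is permissible since $x_0$ itself is almost periodic.

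Third, for each $i$ the true first return $x_i'$ of $x_i$ is in general not equal to the selected $x_{i+1}$, so the difference between $\oX(\gamma_{i'},t_{i+1})$ and $\oX(\gamma_{i+1},t_{i+1})$ must be estimated. For $q=0$ it is controlled by the $\alpha$-closeness of the preceding step together with a standard Jacobi-field bound for the dependence of parallel transport on its endpoint. For $q\geq 1$, each Lie derivative pulls in covariant derivatives of $X$ and of the curvature tensor, and iterating through $q=1,\ldots,r$ contributes a factor polynomial of degree $r$ in the diameter $L$ of the normal neighborhood. Collecting all geometric constants into $b$ and the diameter powers into $L^r$ then yields the bound $bL^r\alpha$ in (\ref{na1}).

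The main obstacle lies in this final step: propagating a jet-level $\alpha$-closeness into a uniform $C^r$ estimate taken along two different (but close) geodesic endpoints, since each successive Lie derivative couples to the Riemann curvature tensor and its covariant derivatives. This is precisely where the $L^r$ factor arises and why the uniform $C^r$-boundedness of $X$ on $K$ --- guaranteed by $X\in\SVF$ together with boundedness of $\NW(X)$ --- is essential. A secondary but simpler subtlety is closing the cycle exactly at $x_n=x_0$; it is handled by choosing the final return index so that $x_{n-1}'$ lies within the tolerance defining $U(\omega)$, declaring $x_n:=x_0$, and absorbing the induced residual into the same constant $bL^r\alpha$.
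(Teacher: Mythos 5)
Your proposal follows essentially the same two-part strategy as the paper: (i) a compactness/pigeonhole argument on the bounded family of $r$-jets over $U(\omega)$ to obtain a finite chain of returns whose jets are $\alpha$-close, and (ii) Lipschitz-type estimates converting $C^0$ closeness of endpoints into $C^r$ closeness of the parallel-transported fields. The paper realizes (ii) through direct local-coordinate Lipschitz bounds on the parallel-transport ODE and on iterated Lie brackets (inequalities (\ref{na3})--(\ref{na5})), whereas you phrase it through Jacobi fields and covariant derivatives of the curvature tensor; these give equivalent estimates, and the paper's version is arguably more elementary since it avoids any explicit appeal to $R$ and $\nabla^k R$. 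The paper also separates the intermediate case $i<n-1$ (where all quantities are trivially $\alpha$-close because every point lies in $U(\omega)$) from the closing case $i=n-1$, which is where the covering by $N_j$ balls of radius $bL^r\alpha$ actually does work; your exposition blends these two.

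One imprecision worth flagging: you justify closing the cycle at $x_n=x_0$ by saying "the last selected return point is chosen sufficiently near $x_0$, which is permissible since $x_0$ itself is almost periodic." Almost periodicity of $x_0$ (Definition \ref{aml}) only guarantees that the forward orbit re-enters arbitrarily small neighborhoods of $\omega$, not that it returns arbitrarily close to $x_0$ itself, and it certainly does not by itself control the $r$-jet at the return. The correct mechanism for closing the loop is the finite-cover/pigeonhole argument you already invoked in your second step: among more than $N_j$ return jets, two must fall in the same $bL^r\alpha$-ball, and one relabels so that this pair anchors $v_0$ and $v_n$. If you rely on the pigeonhole for this and drop the appeal to almost periodicity of $x_0$ as the closing mechanism, your argument aligns with the paper's.
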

\begin{proof}
First consider the case $i<n-1$, let $x^j$ be a local coordinate system on $M$, equation (\ref{na2}) writes:
$$\frac{d}{ds}\overline{X^j}(\gamma_{i+1},t_{i+1})+\gamma'^k\overline{X^l}(\gamma_{i+1},t_{i+1})\Gamma^j_{kl}=0$$
It is standard fact of differential equation that the above parallel transport equation and geodesic equation has smooth solutions locally: $X^j(\gamma_{i+1},t_{i+1},i+1)$ and $\gamma_s(x_{i+1})$ respectively, then there is a Lipschitz constant $L$ such that
\begin{equation}\label{na3}\begin{aligned}
|\overline{X^j}(\gamma_{i+1},t_{i+1})-X^j(x_{i+1},t_{i+1})|&<L\dis(\gamma_{i+1},x_{i+1})\\
|\overline{X^j}(\gamma_{i'},t_{i+1})-X^j(x_{i'},t_{i+1})|&<L\dis(\gamma_{i'},x'_{i})
\end{aligned}\end{equation}
 furthermore, since $h_q\in\SVF$
\begin{multline}\label{na4}|dx^j[h_q,[h_{q-1},[\dots,[h_1,X(x_{i+1})]\dots]-dx^j[h_q,[h_{q-1},[\dots,[h_1,X(x'_{i})]\dots]|
\\
<L^q\dis(x_{i+1},x'_i)
\end{multline}
for all $q=0,1,\dots,r$. Combine (\ref{na3}) and (\ref{na4}) one obtains
\begin{equation}\label{na5}\begin{aligned}
&\|\LIE_{h_q}^q\oX(\gamma_{i+1},t_{i+1})-\LIE_{h_q}^q\oX(\gamma_{i'},t_{i+1})\|\\
&<\|\sum_j(|dx^j[h_q,[\dots,[h_1,\oX(\gamma_{i+1})]\dots]-dx^j[h_q,[\dots,[h_1,X(x_{i+1})]\dots]|\frac{\partial}{\partial x^j}\\
&+|dx^j[h_q,[\dots,[h_1,\oX(\gamma_{i'})]\dots]-dx^j[h_q,[\dots,[h_1,X(x'_{i})]\dots]|\frac{\partial}{\partial x^j}\\
&+|dx^j[h_q,[\dots,[h_1,X(x_{i+1})]\dots]-dx^j[h_q,[\dots,[h_1,X(x'_{i})]\dots]|
\frac{\partial}{\partial x^j})\|\\
&<\sum_j(L^q\dis(\gamma_{i+1},x_{i+1})\|\frac{\partial}{\partial x^j}\|+L^q\dis(\gamma_{i'},x'_{i})\|\frac{\partial}{\partial x^j}\|
+L^q\dis(x_{i+1},x'_{i})\|\frac{\partial}{\partial x^j}\|)
\end{aligned}\end{equation}
since $\omega$ is nonwandering and $x_{i+1},x'_i,\gamma_{i+1}=\gamma_{i'}=x_0\in U(\omega)$, the three distance in the last inequality of (\ref{na5}) $<\alpha$:
\[
\|\LIE_{h_q}^q\oX(\gamma_{i+1},t_{i+1})-\LIE_{h_q}^q\oX(\gamma_{i'},t_{i+1})\|<bL^q\alpha
\]
for all $i<n-1$, where $b=3\sum_j\|\frac{\partial}{\partial x^j}\|$ is a positive constant.\par
Now to prove equation (\ref{na1}) for $i=n-1$.  Since $M$ is locally compact and $X\in\SVF$, the set of  vectors in the tangent bundle $v\in TM\oplus TTM\oplus\dots\oplus T^rM|U(\omega)=A$ is bounded, i.e. $\|v\|_{\oplus}\coloneqq\sum_k\|v|T^kM\|<K$ for some positive constant $K$. Therefore for any $\alpha>0$, there is a finite collection of open balls $B_{bL^r\alpha}(v_j)$ centered at $v_j$ with radius $bL^r\alpha$, $v_j\in A$, $j=0,1,\dots,N_j$, such that 
\begin{equation}\label{na6}A\subset\bigcup_{j=0}^{N_j}B_{bL^r\alpha}(v_j)
\end{equation}
 If one choose a sequence of $v_j$, so that 
$$v_j|T^{q+1}M=\LIE_{h_q}^q\oX(\gamma_{j+1},t_{j+1})$$
 then by (\ref{na6}), $v_n\cap A\neq\emptyset$ for any $n>N_j$, i.e. $v_n\in B_{bL^r\alpha}(v_0)$, therefore
$$\|\LIE_{h_q}^qX(x_0,t_{0})-\LIE_{h_q}^q\oX(\gamma_{n'},t_{n})\|=\|v_0|T^{q+1}M-v_n|T^{q+1}M\|<bL^r\alpha$$
for all $0\leq q\leq r$.
\end{proof}
For the sake of simplicity, from now on we assume that $N_j=1$ and $X(x)$ is short for $X(x,t)$. 
The concept of flow box comes from the theory of exterior differential forms, and was used in the proof of $C^1$ closing lemma \cite{Pu1967} to define local coordinates and make perturbations within. The same idea is modified in the next definition so that the flow box extends along the trajectory.
\begin{defis}\label{lfb}
A long flow box $F(J)$ of a trajectory $J(x_0)=\{\phi_t(x_0)|0\leq t\leq T\}$ is the set of points in the cross-section perpendicular to $J$ with distance less than $\epsilon$ to the $J$.
\end{defis}
Let $\Sigma(x_0)$ be the cross-section normal to $J(x_0)$. By choosing $\epsilon$ small enough, one can establish a long flow box for $J$ so that the projection map $\pi_J|F(J)$ is unique, the next theorem is a detailed restatement of the nonautonomous closing lemma.
\begin{thm}\label{clm1}
If a field $X\in\SVF$ has nontrivial nonwandering point $\omega$, there is a field $Y\in\SVF$ $C^r$-close to $X$ such that
\begin{equation}\label{clm000}\begin{aligned}
\|\LIE^q_{h_q}X|F(J)-\LIE^q_{h_q}Y|F(J)\|
&<\frac{\rho_0}{\epsilon^q}b^2L^{2r}(L^r+H^r)(\epsilon L^r+2)\alpha\\
X|M\setminus F(J)&=Y|M\setminus F(J)
\end{aligned}\end{equation}
for all $q=0,1,\dots,r$, and $J(x_0)$ is a periodic orbit for $Y$, $x_0\in U(\omega)$:
$$\CD^q_{\psi'_t}\psi_0(x_0)=\CD^q_{\psi'_t}\psi_T(x_0)$$
\end{thm}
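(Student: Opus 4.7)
The plan is to build $Y$ as an explicit local perturbation of $X$ supported in the long flow box $F(J)$, where $J$ is the pre-determined periodic orbit constructed from Lemma \ref{nonauto}. The perturbation has the form announced in the statement of the $C^r$ Closing Lemma in the outline, namely a bump function $\rho$ in the transverse distance times the difference of two parallel-transported vector fields. First I would invoke Lemma \ref{nonauto} to produce the almost periodic sequence $x_0,x_1,\dots,x_n=x_0$ together with geodesics $\gamma_{s_i}$ realising the small-return estimate (\ref{na1}). This data is what makes it possible to impose periodicity on $Y$ at a cost controlled only by $\alpha$, independent of the length $T$ of the orbit $J$.

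Next I would set up the long flow box $F(J)$ from Definition \ref{lfb}, taking $\epsilon$ small enough that the normal projection $\pi_J$ is well defined and that successive pieces of $J$ do not interfere (or, if they do, they interfere only through the superposition mechanism mentioned in the abstract). On $F(J)$ define $\oX(x)$ and $\oY(x)$ by parallel transporting $X$ and the target closing vector along the normal geodesics from $\pi_J(x)$ to $x$; the equation (\ref{na2}) guarantees smoothness. Then set
\[
Y(x,t)=X(x,t)+\rho\bigl(\dis(\pi_J(x),x)+L|t-t_x|\bigr)\bigl(\oY(x)-\oX(x)\bigr),
\]
where $\rho$ is a standard $C^r$ bump supported in $[0,\epsilon)$ equal to $1$ near $0$, so $Y=X$ outside $F(J)$ as required. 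Periodicity of $J(x_0)$ under $Y$ follows because on the core of the flow box $Y$ coincides with the parallel transport of the target closing field; the correction piecewise bridges each segment $x_i \to x_{i+1}$ using the geodesic $\gamma_{s_i}$, so that the $n$-th return lands exactly on $x_0$ rather than on the small-error point provided by Lemma \ref{nonauto}.

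The main work, and what I expect to be the hard part, is verifying the $C^r$ estimate (\ref{clm000}). By the product rule, $\LIE^q_{h_q}(Y-X)$ is a sum of terms in which $k$ derivatives fall on $\rho$ and $q-k$ on $\oY-\oX$. Derivatives on $\rho$ contribute factors of $\|\rho^{(k)}\|\le \rho_0/\epsilon^k$, and derivatives on $\dis(\pi_J(x),x)+L|t-t_x|$ contribute factors bounded by the Lipschitz constant $L$ of the projection and the Hessian-type constant $H$ of the geodesic normal coordinates. Derivatives on $\oY-\oX$ are controlled by differentiating the parallel transport ODE (\ref{na2}); since $\oY-\oX$ itself is pointwise bounded by $bL^r\alpha$ thanks to (\ref{na1}) together with (\ref{na3})--(\ref{na5}), and its first $q$ derivatives pick up at most a factor of $L^q+H^q$ from iterated differentiation of the transport equation, one collects all contributions into the claimed bound $\rho_0 b^2L^{2r}(L^r+H^r)(\epsilon L^r+2)\alpha/\epsilon^q$. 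The slightly delicate point is the factor $\epsilon L^r+2$: it accounts separately for the piece of the perturbation that lives away from $J$ (where the geodesic distance contributes $\epsilon L^r$) and the piece concentrated near the axis (the constant $2$), and bundling them requires splitting the sum over $k$ into the $k=0$ case and $k\geq 1$ case before re-combining.

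Finally, since each $\alpha$ can be chosen arbitrarily small by Lemma \ref{nonauto} and all other constants are fixed once $X$ and the connection are given, $Y$ lies in any prescribed $C^r$ neighborhood $\NB(X)$, completing the proof. The autonomous version alluded to after the lemma statement is obtained by dropping the $L|t-t_x|$ term and superposing parallel transports from different sections of $J$, which changes none of the estimates above.
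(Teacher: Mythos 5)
Your high-level picture — a long flow box around a pre-determined closed curve, parallel-transported vector fields, a bump function that vanishes at radius $\epsilon$, and a final $C^r$ estimate scaling like $\rho_0/\epsilon^q$ — does match the paper's. But there is a genuine gap at the very start: you never actually construct the periodic orbit $J$, and the construction is not supplied by Lemma \ref{nonauto}. That lemma only produces a finite sequence of almost-periodic points together with the small-return estimate (\ref{na1}); it does not hand you a smooth closed curve. In the paper the first (and load-bearing) step of the proof is a Hermite interpolation $f_t$ of the original trajectory $\phi_t(x_0)$, chosen so that $\CD^q_{f'_t}f_0 = \CD^q_{f'_t}f_T$ for all $q\le r$, together with the estimate (\ref{clm03}) saying that $f_t$ stays $C^r$-close to $\phi_t$ with the closing error controlled by $bH^rL^r\alpha$. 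The field $\oY$ you parallel-transport is precisely $f'_t$, and the bound (\ref{clm04}) on $\|f'_t - X|f_t\|$ (and its iterated derivatives) is what eventually feeds the factor $(L^r+H^r)$ in (\ref{clm000}). Your alternative picture — ``bridging each segment $x_i\to x_{i+1}$ using the geodesic $\gamma_{s_i}$'' — would produce a curve with corners and would not by itself give a $C^r$-smooth closed orbit nor the quantitative closeness to $\phi_t$; the geodesics $\gamma_{s_i}$ in Lemma \ref{nonauto} serve only to measure the return gap, not as building blocks of $J$.

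Two secondary remarks. First, the uniqueness of $\pi_J$ on $F(J)$ is not just a ``take $\epsilon$ small enough'' statement: the paper proves an explicit lower bound on the radius of curvature of $J$ (via the arclength reparametrisation and (\ref{clm04})), and then controls global self-overlap of the flow box by the time cutoff $L|t-t_x|$ in the nonautonomous perturbation — not by the superposition device, which is reserved for the autonomous and homoclinic cases (Theorem \ref{con1}). Second, your $C^r$ estimate uses a Leibniz decomposition of $\LIE^q_{h_q}(Y-X)$ into terms with $k$ derivatives on $\rho$ and $q-k$ on $\oY-\oX$, whereas the paper splits the test direction $h_q$ into its normal component $h_q|\oY$ (handled by integrating the parallel-transport ODE along $\gamma$, giving the $\epsilon L^r$ part) and its cross-sectional component $h_q|\Sigma$ bounded by $h_q|\gamma'$ (giving the remaining $+2$ of $\epsilon L^r + 2$). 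Your attribution of the $+2$ to a ``near the axis'' piece is not what generates it in the paper's argument. The directional split is the mechanism that lets the paper exploit the parallel-transport equations (\ref{clm33}) directly, so if you want to land on the exact bound in (\ref{clm000}) you would need to redo the estimate along those lines rather than by a Leibniz expansion.
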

\begin{proof}
The proof is divided into 3 parts. \par
\textbf{The periodic orbit $J(x_0)$} Since $x_0\in U(\omega)$, there is a coordinate chart $x^j$ on $U(\phi_t(x_0))\subset M$ such that a interpolation $f_t(x_0)\subset U(\phi_t(x_0))$ could be constructed \cite{Sp1960}, which satisfies
\begin{equation}\label{clm03}\begin{aligned}
&\CD^q_{f'_t}f_0(x_0)=\CD^q_{f'_t}f_T(x_0)\\
&|\frac{d^q}{dt^q}f^j_t(x_0)-\frac{d^q}{dt^q}\phi^j_t(x_0)|<H^q|\frac{d^q}{dt^q}\phi^j_0(x_0)-\frac{d^q}{dt^q}\phi^j_T(x_0)|\quad\mathrm{for}\quad0\leq t\leq T\\
&<bH^rL^r\alpha
\end{aligned}\end{equation}
where $H$ is a positive constant depending on the type of interpolation (Hermite interpolation in this case), the last inequality follows from inequality (\ref{na1}) by setting $h_q=X$ and $n=0$. Put equation (\ref{clm03}) into the estimation of $C^r$-size of this interpolation yields
\begin{equation}\label{clm04}\begin{aligned}
&\|\overbrace{f'_t\dots f'_t}^q X(f_t)-\overbrace{f'_t\dots f'_t}^q f'_t(f_t)\|\\
&<|dx^j\overbrace{f'_t\dots f'_t}^q X(f_t)-dx^j\overbrace{f'_t\dots f'_t}^q X(\phi_t)|\|\frac{\partial}{\partial x^j}\|\\
&+|dx^j\overbrace{f'_t\dots f'_t}^q X(\phi_t)-dx^j\overbrace{f'_t\dots f'_t}^q f'_t( f_t))|\|\frac{\partial}{\partial x^j}\|\\
&<\sum_j(L^q|\frac{d^q}{dt^q}f^j_t(x_0)-\frac{d^q}{dt^q}\phi^j_t(x_0)|\|\frac{\partial}{\partial x^j}\|\\
&+H^q|\frac{d^q}{dt^q}f^j_t(x_0)-\frac{d^q}{dt^q}\phi^j_t(x_0)|\|\frac{\partial}{\partial x^j}\|)\\
&<b^2(L^r+H^r)L^r\alpha
\end{aligned}\end{equation}
for all $q=0,1,\dots,r$. One obtains a periodic orbit $J(x_0)=\{f_t(x_0)|t\in\mathbb{R}\}$.

\textbf{uniqueness of $\pi_J$.} For simplicity, assume $L_{min}>0$ is a lower bound for $\|X|J\|$, as will be shown in theorem \ref{con1} a different long flow box needs to be constructed if $\|X\|\to0$ without bound. Let $s(t)$ be a reparameterization by arc length for $f_t$ such that 
\begin{equation}\label{clm11}\begin{aligned}
\|\CD_{f'_{s(t)}}f_{s(t)}\|&=|s'(t)|\|\CD_{f'_s}f_s\|=1\\
s'(t)&\geq0
\end{aligned}\end{equation}
combine equation (\ref{clm04}) and (\ref{clm11}) one obtains an estimation for the curvature of $J$
\begin{equation}\begin{aligned}
\|\CD^2_{f'_{s(t)}}f_{s(t)}\|&=\|\CD_{f'_{s(t)}}(s'(t)\CD_{f'_s}f_s)\|=\|s'(t)\frac{d}{ds}s'(t)\CD_{f'_s}f_s+\CD_{f'_{s(t)}}\CD_{f'_s}f_s\|\\
&<|s'(t)| \|\CD_{f'_s}f_s\| \|\frac{d}{ds}\frac{1}{\|\CD_{f'_s}f_s\|}\| +|s'(t)|\|\CD_{f'_s}f'_s\|\\
&<\frac{\sum_j|\frac{d^2}{ds^2}f^j|\|\frac{\partial}{\partial x^j}\|}{\|X|J\|^2-\|X|J-Y|J\|^2}+\frac{1}{\|\CD_{f'_s}f'_s\|}\sum_{i,j,k}|\frac{d^2}{ds^2}f^j_s+f'^i_sf'^j_s\Gamma^j_{ij}|\|\frac{\partial}{\partial x^j}\|\\
&<\frac{L+b^2(L^r+H^r)L^r\alpha}{L_{min}^2-(b^2(L^r+H^r)L^r\alpha)^2}+\frac{bL^2+b^2(L^r+H^r)L^r\alpha}{L_{min}}
\end{aligned}\end{equation}
since $b$, $L, H$ are bounded constants, $L_{min}>0$ and $\alpha\to0$, an estimation of the radius of curvature $Rc$ of $J$ yields:
\[Rc>\frac{1}{\sup\|\CD^2_{f'_{s(t)}}f'_{s(t)}\|}>0\]
Therefore, a long flow box $F(J)$ with radius $0<\epsilon<\inf Rc$ has unique projection $\pi_J|F(J)$ locally in the $\epsilon$-neighborhood of $\pi_J(x)$. For $\pi_J$ to be nonunique, it is necessary that $f_t$ leave this $\epsilon$-neighborhood and then return to it, since $\|f'_t\|<L$, the minimum time of return $t_{min}>\frac{\epsilon}{L}$, i.e. if the nonautonoums perturbation vanishes before $\frac{\epsilon}{L}$, the projection is unique. \par

 \textbf{$C^r$ size between $X$ and $Y$} Let $Y$ be a smooth extension of $Y|f_t:=f'_t|f_t$ to $M$, which satisfies
\begin{equation}\label{clm31}\begin{aligned}
Y(x,t):=X(x,t)+\rho(\dis(\pi_J(x),x)+L|t-t_x|)(\oY(x)-\oX(x))
\end{aligned}\end{equation}
where $f_{t_x}(x_0)=\pi_J(x)$, $\rho(d)$ is a positive scalar function such that
\begin{equation}\label{clm{32}}\begin{aligned}
\rho(0)=1\quad\rho^{(q)}(d)=0\quad d\geq\epsilon\\
0<|\rho^{(q)}(d)|<\frac{\rho_0}{\epsilon^q}\quad0\leq d\leq\epsilon
\end{aligned}\end{equation}
for all $q=0,1,\dots,r$. $\rho_0$ is a positive bounded constant. $\oY$ and $\oX$ are parallel transports along a geodesic $\gamma_\tau(\pi_J(x))=x$ between $x$ and $\pi_J(x)$, they satisfy
\begin{equation}\label{clm33}\begin{aligned}
\CD_{\gamma'_\tau(\pi_J(x))} \oX|\gamma_\tau(\pi_J(x))=0\quad\oX(\pi_J(x))=X(\pi_J(x))\\
\CD_{\gamma'_\tau(\pi_J(x))} \oY|\gamma_\tau(\pi_J(x))=0\quad\oY(\pi_J(x))=Y(\pi_J(x))
\end{aligned}\end{equation}
Since the connection $\CD$ is compatible with metric, any vector in $TM$ under parallel transport preserves inner product, i.e.
\begin{equation}\label{clm34}
\|\oX|\gamma_\tau-\oY|\gamma_\tau\|=\|X(\pi_J(x))-Y(\pi_J(x))\|
\end{equation}
and since $\pi_J$ is unique in $F(J)$, there is a splitting such that
\[
TM_x=\Sigma(\pi_J(x))\oplus\oY(x)\quad x\in F(J)
\]
where $\langle v,\oY\rangle=\langle \gamma'_0,Y\rangle=0$ for any $v\in\Sigma$. First let's restrict attention to $h|\oY$, consider the parallel transport of higher order derivatives, by (\ref{clm33})
\[\begin{aligned}
&\|\CD_{\gamma'}\circ\CD^q_{h|\oY}(\oX-\oY)-\CD_{\gamma'}\circ\overline{\CD^q_{h|Y}(X-Y)}\|\\
&=\|\CD_{\gamma'}(h|\oY\CD^{q-1}_{h|\oY}(\oX-\oY)+h|\oY\cdot\CD^{q-1}_{h|\oY}(\oX-\oY)\cdot\Gamma)\|\\
&=\|0+\oY\cdot\CD^{q-1}_{h|\oY}(\oX-\oY)\cdot\CD_{\gamma'}\Gamma\|\\
&<b^2L^{q+r}(L^r+H^r)\alpha
\end{aligned}\]
where $\CD_{\gamma'}\circ\overline{\CD^q_{h|Y}(X-Y)}=0$, the last inequality follows inequality (\ref{clm04}) equation (\ref{clm34}) and induction on $q$. Integrate the above inequality one obtains
\[\begin{aligned}
&\|\CD^q_{h|\oY}(\oX-\oY)-\overline{\CD^q_{h|Y}(X-Y)}\|\\
&<\int^\epsilon_{0}\|\CD_{\gamma'}\circ\CD^q_{h|\oY}(\oX-\oY)-\CD_{\gamma'}\circ\overline{\CD^q_{h|Y}(X-Y)}\|d\tau\\
&<b^2L^{q+r}(L^r+H^r)\epsilon\alpha
\end{aligned}\]
since parallel transport preserves norm:
\[\|\overline{\CD^q_{h|Y}(X-Y)}\|=\|\CD^q_{h|Y}(X-Y)\|
\]
by inequality (\ref{clm04})
\[\begin{aligned}
\|\CD^q_{h|\oY}(\oX-\oY)\|
&<\|\CD^q_{h|\oY}(\oX-\oY)-\overline{\CD^q_{h|Y}(X-Y)}\|
+\|\overline{\CD^q_{h|Y}(X-Y)}\|\\
&<b^2L^{q+r}(L^r+H^r)\epsilon\alpha+b^2L^{r}(L^r+H^r)\alpha
\end{aligned}\]
one obtains an estimation
\begin{equation}\label{clm315}\begin{aligned}
\|\LIE^q_{h|\oY}(\oX-\oY)\|
&=\|\sum^q_{l=0}\CD^l_{h|\oY}(\oX-\oY)\overbrace{\oY\dots\oY}^l-\LIE^{q-1}_{h|\oY}(\oX-\oY)\|\\
&<\sum^q_{m=0}\sum^m_{l=0}\|\CD^l_{h|\oY}(\oX-\oY)\overbrace{\oY\dots\oY}^l\|\\
&<b^2L^{q+2r}(L^r+H^r)\epsilon\alpha+b^2L^{2r}(L^r+H^r)\alpha
\end{aligned}\end{equation}

Now consider the restricted vector field $h|\gamma'$, by parallel transport equation (\ref{clm33}) and induction on $q$
\begin{equation}\label{clm316}\begin{aligned}
\|\LIE^q_{h|\gamma'}(\oX-\oY)\|
&=\|h|\gamma'\LIE^{q-1}_{h|\gamma'}(\oX-\oY)-\LIE^{q-1}_{h|\gamma'}(\oX-\oY)\gamma'\|\\
&\leq\|\LIE^{q-1}_{h|\gamma'}h|\gamma'(\oX-\oY)\|+\|\LIE^{q-1}_{h|\gamma'}(\oX-\oY)\gamma'\|\\
&=\|\LIE^{q-1}_{h|\gamma'}(-h|\gamma'\cdot\Gamma\cdot(\oX-\oY))\|+\|\LIE^{q-1}_{h|\gamma'}(\oX-\oY)\gamma'\|\\
&<b^2(L^r+H^r)L^{2r}\alpha
\end{aligned}\end{equation}
by smoothness of $X$ and $Y$, and 
$$D\dis(\pi_J(x),x)\leq grad \dis(\pi_J(x),x)=\gamma'\dis(\pi_J(x),x)=1$$
one obtains
$$\|\LIE^q_{h|\Sigma}(X-Y)\|\leq\|\LIE^q_{h|\gamma'}(X-Y)\|$$
combine equation (\ref{clm31}), (\ref{clm315}) and (\ref{clm316}) results the $C^r$ size between $X$ and $Y$
\begin{equation}\label{clm317}\begin{aligned}
\|\LIE^q_{h_q}(X-Y)\|&=\|\LIE^q_{h_q|\oY+h_q|\Sigma(x)}(X-Y)\|\\
&\leq\|\LIE^q_{h_q|\oY}(X-Y)\|+\|\LIE^q_{h_q|\gamma'(x)}(X-Y)\|\\
&<\frac{\rho_0}{\epsilon^q}b^2L^{2r}(L^r+H^r)(\epsilon L^r+2)\alpha
\end{aligned}\end{equation}
for all $q=0,1,\dots,r$. Since $\rho_0, b, L, H$ are bounded constants independent of the almost periodic point $x_0$ chosen and $\epsilon>0$, $\alpha\to0$, the $C^r$ size between $X$ and $Y$ is arbitrarily small.
\end{proof}
As mentioned in the proof of uniqueness of the projection map in long flow box, if $\|X\|$ has no lower bound on some almost periodic orbit, the projection $\pi_J$ becomes nonunique. Similar construction as the proof of theorem \ref{clm1} could be applied to such an almost periodic orbit which creates a homoclinic orbit (the stable and unstable manifold of a stationary point which coincides). The existence of homoclinic orbit contradicts structural stability \cite{Rs1973}, since every system in the $C^r$ neighborhood of a stable system is stable. Therefore $X$ being structurally stable implies it has no almost periodic point close to equilibrium.
\begin{thm}\label{con1}
If there is an almost periodic point $x_0\in U(\omega)$ where $\|X(y)\|\to0$ for some $y=\phi_{t_0}(x_0)$, $0<t_0<T$, then there is $Y$ $C^r$-close to $X$ so that $Y(y)=0$ is a homoclinic point.
\end{thm}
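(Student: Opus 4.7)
My plan is to recycle the machinery of Theorem \ref{clm1} and alter it at two specific places: first, to replace the $\epsilon$-tube having a \emph{single} projection by one that tolerates finitely many projections near $y$; second, to enforce the equation $Y(y)=0$ directly into the \emph{a priori} periodic interpolation. The resulting $Y$ will still obey the $C^r$-estimate (\ref{clm317}) up to a bounded multiplicative constant, and the modified periodic curve $J$ will degenerate into a homoclinic loop at the new equilibrium $y$.

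\textbf{Self-intersecting long flow box.} Because $\|X(y)\|$ can be taken arbitrarily small, the \emph{a priori} periodic curve $f_t$ built in Theorem \ref{clm1} spends an unboundedly long time in any $\epsilon$-neighborhood of $y$, and the normal projection $\pi_J$ develops finitely many preimages on a subtube around $y$. Following the idea sketched in the abstract, for $x$ in the overlap I enumerate its projections $\pi^{(k)}_J(x)$ and associated times $t_x^{(k)}$ with parallel transports $\oX^{(k)},\oY^{(k)}$, and replace formula (\ref{clm31}) by
\[
Y(x,t)-X(x,t)=\sum_{k=1}^{N}\rho\!\left(\dis(\pi_J^{(k)}(x),x)+L|t-t_x^{(k)}|\right)\bigl(\oY^{(k)}(x)-\oX^{(k)}(x)\bigr).
\]
Each summand obeys the same $C^r$-bound as in (\ref{clm315})--(\ref{clm316}), so the triangle inequality only multiplies the total estimate by the local sheet count $N$, which depends on the geometry of $J$ in the tube but not on $\alpha$.

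\textbf{Forcing the equilibrium into the interpolation.} I then modify the Hermite interpolant of Theorem \ref{clm1} on a small window $(t_0-\delta,t_0+\delta)$ by gluing in a polynomial whose value and all derivatives through order $r$ vanish at $t=t_0$, matching $f_t$ smoothly at the endpoints. By choosing $x_0$ closer to $\omega$ the quantities $\|\LIE^q_{h_q}X\|$ along $\phi_{[t_0-\delta,t_0+\delta]}(x_0)$ can be made as small as we like, so the Hermite bound (\ref{clm03}) upgrades to a bound on this modification itself of order $bH^rL^r\alpha$. Defining $Y$ outside $F(J)$ to equal $X$ and inside $F(J)$ by the superposed formula above, the equality $Y(y)=0$ holds by construction and estimate (\ref{clm317}) continues to hold, the extra factor $N$ being absorbed into the global constant.

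\textbf{Homoclinic structure and the expected obstacle.} With $Y(y)=0$ the point $y$ becomes stationary for $\psi_t$; since $f'_t\neq 0$ off $y$, the set $J\setminus\{y\}$ is a single orbit of $\psi_t$ whose forward limit as $t\uparrow t_0$ and backward limit as $t\downarrow t_0$ (mod the period $T$) both equal $y$, so $J\setminus\{y\}\subset W^s(y)\cap W^u(y)$, which is the required homoclinic loop. The hard part I anticipate is the first step: keeping the $C^r$-size of the superposed perturbation small as $N$ may grow when $\|X(y)\|\to 0$. The way out is to exploit that near $y$ both $\oX^{(k)}$ and $\oY^{(k)}$ are themselves of order $\|X(y)\|$, so the additional sheets contribute cross-terms in the iterated Lie derivatives that \emph{inherit}, rather than amplify, the local smallness of the field; a bookkeeping argument analogous to the derivation of (\ref{clm315})--(\ref{clm316}) then absorbs $N$ into the overall $\alpha$-dependence.
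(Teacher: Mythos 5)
Your high-level plan coincides with the paper's: superpose the tube perturbations over the several projections near $y$, and exploit the smallness of the field near $y$ to keep the $C^r$-size under control and produce a homoclinic loop. Your observation that near $y$ both $\oX^{(k)}$ and $\oY^{(k)}$ are themselves small, so the extra sheets inherit rather than amplify smallness, is exactly the paper's remark that the relevant Lie derivatives vanish on the limiting sections through $y$.

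There is, however, a concrete gap in your superposition formula. You write
\[
Y(x,t)-X(x,t)=\sum_{k=1}^{N}\rho\!\left(\dis(\pi_J^{(k)}(x),x)+L|t-t_x^{(k)}|\right)\bigl(\oY^{(k)}(x)-\oX^{(k)}(x)\bigr)
\]
with each bump function $\rho$ depending only on \emph{its own} distance $d^{(k)}$. This fails to make $J$ an exact orbit of $Y$ in the overlap region. Take $x$ on the branch $J_1$ with $d_1=0$ but $0<d_2<\epsilon$: the $k=1$ summand correctly produces $Y(x)=f'(x)+(\text{extra})$, and the "extra" $\rho(d_2)\bigl(\oY^{(2)}(x)-\oX^{(2)}(x)\bigr)$ from the foreign sheet is nonzero. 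So $Y|J_1\ne f'|J_1$, and the a priori curve is no longer invariant under $Y$ — it is only approximately invariant, which does not give you a closed/homoclinic orbit. The paper avoids this by using \emph{coupled} bump functions $\orho_i(d_1,d_2)$ subject to $\orho_i(d_1,d_2)=0$ whenever the \emph{other} distance $d_j=0$, together with $\orho_i(d_1,d_2)=1$ when $d_i=0$ and joint control of the cross-partials $\tfrac{\partial^q}{(\partial d_j)^q}\orho_i+\tfrac{\partial^q}{(\partial d_i)^q}\orho_j$. Without that coupling the construction does not close. Your second modification (forcing $Y(y)=0$ by patching the Hermite interpolant on a short window) is a legitimate variant of the paper's reparametrization $\xi_t=f_{p(t)}$, which slows the parametrization down near $y$ and shrinks $\epsilon(x)$ there; either route is acceptable once the superposition is made consistent.
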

\begin{proof}
 Similar to the construction in theorem \ref{clm1} a periodic orbit could be constructed which satisfies equation (\ref{clm03}) and (\ref{clm04}), i.e.
\[\begin{aligned}
\CD^q_{f'_t}f_0(x_0)&=\CD^q_{f'_t}f_T(x_0)\\
\|\overbrace{f'_t\dots f'_t}^q X-\overbrace{f'_t\dots f'_t}^q f'_t\|&<b^2(L^r+H^r)L^r\alpha
\end{aligned}\]
 Consider a reparameterization $\xi_t=f_{p(t)}$, so that
\[\begin{aligned}
\xi_t(x_{-1})=f_{p(t)}(x_{-1})&\quad\xi_{-t}(x_{1})=f_{-p(t)}(x_{1})\\
x_1=f_\tau(y)&\quad x_{-1}=f_{-\tau}(y)\\
p|0=identity&\quad and \quad 0\leq|p^{(q)}(t)|\leq1+\tau
\end{aligned}\] 
for all $q=0,\dots,r$ and $0<\tau<min\{t_0,T-t_0\}$. Denote $Y|\xi_t:=\xi'_t|\xi_t$, from inequality (\ref{clm04}) one obtains an estimation
\begin{equation}\label{hom01}\begin{aligned}
\|\overbrace{\xi'_t\dots\xi'_t}^q(X-Y)|\xi_t\|<(1+\tau)b^2(L^r+H^r)L^r\alpha
\end{aligned}\end{equation}
Similar to equation (\ref{clm11}), let $s(t)$ and $\overline{s}(t)$ be reparameterizations by arc length for $f_t$ and $\xi_t$ respectively,  since $\xi_t$ is already a reparameterization of $f_t$, $f_{s(t)}=\xi_{\overline{s}(t)}$. Set $\epsilon(x)$ as the radius of the long flow box for $\xi_t$ which satisfies
\begin{equation}
\epsilon(x)<\frac{1}{\|\CD^2_{\xi'_{\overline{s}(t)}}\xi_{\overline{s}(t)}\|}\quad where\quad \pi_J(x)=\xi_{\overline{s}(t)}
\end{equation}
For simplicity assume that there is a lower bound $L_{min}<X(x)$ for $x\notin U(y)$, one obtains an estimation similar to inequality (\ref{clm000})
\[\|\LIE^q_{h_q}(X-Y)|M\setminus U(y)\|
<\frac{\rho_0}{\epsilon(x)^q}b^2L^{2r}(L^r+H^r)(\epsilon(x) L^r+2)\alpha
\]
where $\epsilon(x)>0$. Assume that $\lim_{t\to\infty}f'_t=X(y)=0$ for $y\notin U(\omega)$, then clearly $\lim_{t\to\infty}\phi_t(x_0)=y\notin U(\omega)$, $x_0$ is not almost periodic, therefore $y\in U(\omega)$. 

By estimation (\ref{hom01}) $\lim_{x\to y}\epsilon(x)\to0$, $\pi_J(x)$ becomes nonunique for $x\in U(y)$. Consider scalar fields $\orho_i(\dis(\pi_{J_1}(x),x),\dis(\pi_{J_2}(x),x))$ with $J_1,  J_2\in U(y)$ 2 pieces of trajectories such that $\overline{J_1}\cap\overline{J_2}=y$, these scalar fields satisfy
\[\begin{aligned}
\orho_i(d_1,d_2)&=0\quad if\quad d_i\geq\epsilon\quad or\quad d_j=0\\
\orho_i(d_1,d_2)&=1\quad if\quad d_i=0\\
|\frac{\partial^q}{(\partial d_j)^q}\orho_i(d_1,d_2)
&+\frac{\partial^q}{(\partial d_i)^q}\orho_j(d_1,d_2)|<\rho_0\quad if\quad d_i,d_j<\frac{\epsilon}{2}\\
|\frac{\partial^q}{(\partial d_i)^q}\orho_i(d_1,d_2)|&<\frac{\rho_0}{\epsilon^q}\quad if\quad d_i<\epsilon
\end{aligned}\]
Then same construction as (\ref{clm31}) could be applied with $\rho$ replaced by $\orho$
\begin{equation}\label{hom20}
Y(x,t):=X(x,t)+\sum_i\orho_i(d_1,d_2,L|t-t_x|)(\oY_i(x)-\oX_i(x))
\end{equation}
where $Y_i=Y|\xi_t(x_i)$. Notice that
\[
\LIE^q_{h_q}\sum_i\orho_i(d_1,d_2,L|t-t_x|)(\oY_i(x)-\oX_i(x))|\lim_{t\to\infty}\Sigma(\xi_t(x_+))\cup\Sigma(\xi_{-t}(x_-))=0
\] 
since $X(y)=Y(y)=0$. One obtains the $C^r$ size similar to inequality (\ref{clm317})
\[\begin{aligned}
&\|\LIE^q_{h_q}(Y-X)\|=\|\LIE^q_{h_q}\sum_i\orho_i(d_1,d_2,L|t-t_x|)(\oY_i(x)-\oX_i(x))\|\\
&<max_{q'<q}|\LIE^{q'}_{h_{q'}}\sum_i\orho_i(d_1,d_2,L|t-t_x|)|\cdot max_{q'<q}\|\LIE^{q'}_{h_{q'}}\sum_i(\oY_i(x)-\oX_i(x))\|\\
&<\frac{\rho_0(\epsilon^r+1)}{\epsilon^r}b^2L^{2r}(L^r+H^r)(\epsilon L^r+2)\alpha
\end{aligned}\] 
which is arbitrarily small. Since $\lim_{t\to\infty}Y(\xi_t)=\lim_{t\to\infty}Y(\xi_{-t})=0$, $y$ is a homoclinic point of $Y$, hence a homoclinic point of $X$.
\end{proof}
As a corollary of the $C^r$ closing lemma, necessity of condition \romannumeral 1) is proved.
\begin{cor}\label{clm2}
If a system $X\in\SVF$ is $C^r$ structurally stable, $\Omega(X)=\overline{P(X)}$.
\end{cor}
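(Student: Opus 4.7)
The containment $\overline{P(X)}\subset\NW(X)$ is immediate, since every periodic point is nonwandering and $\NW(X)$ is closed. My plan is therefore to establish the reverse inclusion by showing that each neighborhood $U(\omega)$ of a nonwandering point $\omega$ contains a periodic point of $X$.

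If $\omega$ is an equilibrium of $\phi_t$, then trivially $\omega\in P(X)$ and there is nothing to prove. Otherwise, Theorem \ref{con1} applied in its contrapositive form rules out almost periodic points accumulating on equilibria, so $\|X\|$ stays bounded below on a neighborhood of $\omega$, which is exactly the regime in which Theorem \ref{clm1} applies. For any prescribed $\NB(X)$, I would invoke the $C^r$ closing lemma to produce a field $Y\in\NB(X)$ together with a point $x_0\in U(\omega)$ which is periodic for the flow $\psi_t$ of $Y$.

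To transfer this periodic point back to $X$, I would use structural stability: there exists a homeomorphism $h:M\to M$ with $\phi_t\circ h=h\circ\psi_t$, and any topological conjugacy sends periodic orbits to periodic orbits of the same period, so $h(x_0)\in P(X)$. The final step is to ensure that $h(x_0)$ actually lies in $U(\omega)$; this follows provided $h$ can be taken $C^0$-close to the identity as $Y\to X$, so that by shrinking $\NB(X)$ one forces $h(x_0)$ into $U(\omega)$, giving $\omega\in\overline{P(X)}$.

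The step I expect to be the main obstacle is precisely this closeness of $h$ to the identity. The definition of $C^r$ structural stability recalled in the introduction asserts only the existence of a conjugating homeomorphism, not that it lies near the identity. The standard resolution is to invoke the Andronov--Pontryagin strengthening, in which $h$ can be chosen $\epsilon$-close to the identity whenever $Y$ is sufficiently $C^r$-close to $X$. Alternatively, since $X$ and $Y$ agree outside the long flow box $F(J)$ of Theorem \ref{clm1}, one can attempt to build $h$ so that it is the identity on $M\setminus F(J)$ and analyze its action on $F(J)$ via the unique projection $\pi_J$, confirming that $h(x_0)$ remains within $U(\omega)$.
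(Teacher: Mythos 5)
Your route and the paper's both hinge on using the conjugacy from structural stability to transfer a periodic point for the perturbed flow $\psi_t$ back to the unperturbed $\phi_t$, but they differ in how they try to ensure the transported point stays near $\omega$. You argue forward: close at $x_0\in U(\omega)$ to obtain $Y$, invoke the conjugacy $h$ with $\phi_t\circ h=h\circ\psi_t$, and then need the image of the new periodic point under the conjugacy to land back in $U(\omega)$. The paper instead fixes a homeomorphism $h$ in advance, applies Theorem~\ref{clm1} to place the closed orbit of $Y$ through $h(x_i)$ rather than through $x_i$, and then asserts one can ``coincide $h'$ with $h$,'' where $h'$ is the conjugacy supplied by structural stability; it reads off $\phi_{T_i}(x_i)=x_i$ from $h\circ\phi_{T_i}(x_i)=\psi_{T_i}(h(x_i))=h(x_i)$. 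The two arguments thus swap the order in which the conjugacy and the perturbation are chosen.

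The gap you flag in your version is genuine and is not closed by either of your suggested repairs. The definition of structural stability recorded in the introduction grants only the \emph{existence} of a conjugating homeomorphism; that $h$ may be taken $C^0$-close to the identity is a strictly stronger property, and ``Andronov--Pontryagin strengthening'' is the name of a different (stronger) notion of stability rather than a theorem one may freely cite to upgrade the hypothesis. Your alternative --- making $h$ the identity off the long flow box $F(J)$ --- also does not finish, because $F(J)$ follows the entire first-return loop of $x_0$ and so is not contained in $U(\omega)$; nothing prevents $h$ from displacing $x_0$ a large distance within $F(J)$. A clean way to finish without any closeness-to-identity hypothesis is to combine the closing lemma with the general density theorem it yields via a Baire category argument: choose $Y$ arbitrarily $C^r$-close to $X$ in the residual set where $\Omega(Y)=\overline{P(Y)}$; structural stability then gives a conjugacy $h$ with $h(\Omega(X))=\Omega(Y)$ and $h(P(X))=P(Y)$, whence $\Omega(X)=h^{-1}(\overline{P(Y)})=\overline{h^{-1}(P(Y))}=\overline{P(X)}$. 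This argument sidesteps the obstacle you correctly identified and is, in effect, what the paper's ``coincide $h'$ with $h$'' step is reaching for.
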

\begin{proof}
The case $\NW(X)=P(X)$ is trivial, consider a nonwandering point $\omega\in\NW(X)\setminus P(X)$, let $x_i\in U(\omega)$ be a sequence of almost periodic points with period $T_i$ and $\lim_{i\to\infty}x_i=\omega$. Given any homeomorphism $h\in Hom(M)$, one obtains $lim_{i\to\infty}h(x_i)=h(\omega)$. By theorem \ref{clm1}, a field $Y\in\mathcal{U}(X)$ could be constructed such that there is a periodic orbit with period $T_i$ through every $h(x_i)$, which takes the form similar to (\ref{hom20})
\[
Y(x,t):=X(x,t)+\sum_i\orho_i(\dots,d_i,\dots,L|t-t_x|)(\oY_i(x)-\oX_i(x))
\]
If $\psi_t$ is the flow induced by $Y$, by definition of structural stability, there is a homeomorphism $h'$ such that $h'\circ\phi_t=\psi_t\circ h'$. Coincide $h'$ with $h$, one obtains
\[
h\circ\phi_{T_i}(x_i)=\psi_{T_i}(h(x_i))=h(x_i)
\]
i.e. $\phi_{T_i}(x_i)=x_i$ are periodic, thus every $\omega\in \overline{P(X)}$.
\end{proof}

\section{Proof of the Hyperbolicity Theorem}
Necessity of condition \romannumeral 2) for the stability conjecture is proved in theorem \ref{hyp1} (hyperbolicity of $\Omega(X)$). We take a very different approach from traditional ergodic ones, which assume no a priori structure of the (chaotic) system in question other than boundedness of the nonwandering set $\Omega(X)$. \par
Let $E^s(x)$, $E^u(x)$ be the stable and unstable subspace of $TM_x$ for flow $\phi_t$ induced by $X\in\SVF$. Utilizing corollary \ref{clm2} that $\Omega(X)=\overline{P(X)}$, for every $\omega\in\Omega(X)$ there is a sequence of periodic points $p_i\in P(X)$ $i=0,1,\dots$ with period $T_i$ and $\lim_{i\to\infty}p_i=\omega$. Recall that the hyperbolicity of periodic points for $C^1$ diffeomorphisms has been proved in \cite{Fr1971}, we extend this proof to the $C^r$ case.\par
 As mentioned in \cite{Ma1982}, an obstruction is the nonhyperbolic tendency of periodic points. We prove that $d\phi_{T_1(x)-T_0(x)}(p_i)|\Sigma$ has eigenvalues in the cross section bounded away from absolute value $1$.
\begin{lem}\label{hyp0}
If $X$ is structurally stable, for all periodic point $p\in P(X)$ there is a sub-period $0\leq T_0<T_1<\infty$ and a $\delta>0$ such that
\[\begin{aligned}\|d\phi_{T_1(p)}|E^s(p)\|\leq 1-\delta\\
\|d\phi_{-T_1(p)}|E^u(\phi_{T_1}(p))\|\leq 1-\delta
\end{aligned}\]
where $E^s\oplus E^u=T\Sigma$ is an invariant splitting on the cross-section.
\end{lem}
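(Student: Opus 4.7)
The approach is proof by contradiction, using the $C^r$-small perturbation machinery developed for theorem~\ref{clm1}. Suppose the lemma fails: there is a periodic point $p\in P(X)$ such that for every $\delta>0$ and every pair $0\le T_0<T_1<\infty$, the operator $d\phi_{T_1-T_0}$ on the normal cross-section $\Sigma$ at $\phi_{T_0}(p)$ admits an eigenvalue of modulus within $\delta$ of $1$. Taking $T_0=0$ and $T_1=T$ the least period of $p$, the Poincar\'e return map $d\phi_T|\Sigma$ then carries some eigenvalue $\mu$ with $\bigl|\,|\mu|-1\,\bigr|<\delta$ for arbitrarily small $\delta$. Because $p$ is already periodic, we may take the interpolation $f_t(x_0)=\phi_t(p)$ in theorem~\ref{clm1} to coincide with the orbit itself; this yields a long flow box $\CFB(J)$ around $J=\phi_{[0,T]}(p)$ in which all perturbations will live, with $Y=X$ outside.

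The central step is to build a $C^r$-small perturbation of $X$ on $\CFB(J)$ that preserves $J$ as a periodic orbit of period $T$ while moving the eigenvalue $\mu$ of $d\phi_T|\Sigma$ onto the unit circle. To preserve both $J$ and its return time $T(p,\psi)$, the perturbation must vanish along $X|J$ and be supported in the normal directions, i.e.\ in the pieces $h|\Sigma$ already controlled by (\ref{clm316})--(\ref{clm317}). Concretely, one prescribes a small linear shear on $\Sigma$ (a rotation or radial dilation of amplitude $O(\delta)$, calibrated to shift the spectrum of $d\phi_T|\Sigma$ by $\delta$), parallel-transports it outward along geodesics normal to $J$ as in (\ref{clm33}), and multiplies by the bump profile $\rho$ used in (\ref{clm31}). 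The estimate (\ref{clm317}) bounds the $C^r$-distance of the resulting $Y$ from $X$ by $O(\delta/\epsilon^r)$, which can be forced inside $\NB(X)$ by a prior choice of $\delta$. The output $Y\in\NB(X)$ has $J$ as a periodic orbit of period exactly $T$ whose Poincar\'e linearization on $\Sigma$ carries a unit-modulus eigenvalue, and so is nonhyperbolic.

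A third perturbation of the same type now closes the contradiction. By tuning the shear on $\Sigma$ in two opposite directions, one obtains $Y_1,Y_2\in\NB(X)$, each $C^r$-close to $Y$, whose Poincar\'e return maps on $\Sigma$ near $J$ sit on opposite sides of the unit circle (a saddle-node bifurcation when $\mu$ is real, a Hopf bifurcation when $\mu$ is complex conjugate). The topological type of the fixed point of the Poincar\'e return map differs between $Y_1$ and $Y_2$, so no homeomorphism $M\to M$ can conjugate the two flows. Since structural stability of $X$ makes every flow in $\NB(X)$ conjugate to $X$ and hence to each other, we reach a contradiction, and the assumption on $p$ must fail.

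The principal difficulty, as flagged in the introduction and attributed to \cite{Ma1982}, is that a naive perturbation of the linearization at $p$ shifts the return time $T(p,\psi)$ and so destroys the very periodic orbit one is trying to control. The resolution---and the hardest technical point to verify---is that the shear in the second step can be carried out entirely inside the normal distribution $\Sigma(\pi_J(x))$ of the splitting $TM_x=\Sigma(\pi_J(x))\oplus\oY(x)$ supplied by theorem~\ref{clm1}, making the perturbation cross-section preserving: the return time $T(p,\psi)=T$ is left untouched, while the linearization on $\Sigma$ is altered by a prescribed amount and the $C^r$-size is governed by the normal estimate (\ref{clm316}) rather than by any tangential one.
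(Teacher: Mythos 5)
Your proposal captures the paper's central technical insight---that the perturbation of the Poincar\'e linearization must be cross-section preserving so that the return time $T(p,\psi)$ is left fixed, thereby sidestepping the obstruction attributed to Ma\~n\'e---and it agrees with the paper's overall strategy of perturbing the spectrum to nonhyperbolicity and contradicting structural stability. However, the two arguments diverge in two substantive ways, one of which is more than cosmetic.

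First, you fix $T_0=0$ and take $T_1=T$ the minimal period, so that $J=\phi_{[0,T]}(p)$ is already closed and no closing step is needed. The paper instead takes $T_1-T_0$ to be a time at which the orbit returns merely \emph{close} to $\phi_{T_0}(p)$ in the sense of lemma~\ref{nonauto}, then first applies the $C^r$ closing lemma to manufacture a nearby closed orbit of period $T=T_1-T_0$, and only then perturbs its linearization. This is the whole point of the phrase ``finite hyperbolic sub-period'' in the introduction: when the minimal periods $T(p_i)$ of an approximating sequence are unbounded, a bound $\|d\phi_{T(p_i)}|E^s\|\le1-\delta$ over the full period degenerates to a per-unit-time rate tending to $1$, and the uniform constant $\lambda$ used in theorem~\ref{hyp1} disappears. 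The first-return-close times are uniformly bounded by the covering argument in lemma~\ref{nonauto}, which is what supplies the uniformity downstream. Your choice proves the lemma as literally stated for a single $p$, but it discards the feature of the statement that theorem~\ref{hyp1} actually needs, so it cannot be substituted for the paper's proof without weakening the subsequent argument.

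Second, your terminal contradiction is a bifurcation argument: two fields $Y_1,Y_2\in\NB(X)$ whose Poincar\'e maps sit on opposite sides of the unit circle are not topologically conjugate. The paper instead tunes the cross-section-preserving deformation $A_T$ so that the eigenvalue lands \emph{exactly} on $1$, producing a one-parameter band of periodic orbits which cannot be conjugate to the isolated hyperbolic orbit of $X$. Both contradictions are sound; yours is perhaps cleaner in the complex-eigenvalue case (Hopf), while the paper's avoids having to classify the bifurcation type. Neither is an error, but they are distinct arguments and you should be aware the paper does not invoke the saddle-node/Hopf dichotomy. Beyond these two points, your description of how the shear is constructed---a linear map on $\Sigma$, parallel transported normally via (\ref{clm33}) and damped by $\rho$, with $C^r$ size controlled by the normal estimate (\ref{clm316})---matches the paper's $A_T$ construction in substance, and the $O(\delta/\epsilon^r)$ size estimate is in the right spirit, if less explicit than the paper's $\frac{\rho_0}{\epsilon^q}L^{q+1}\log\frac{1}{1-\delta}+L^q\frac{\delta}{1-\delta}$ bound.
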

\begin{proof}
Since $\phi_{T_1}(p)$ returns arbitrarily $C^r$ close to $\phi_{T_0}(p)$, applying a perturbation to $X$ as (\ref{clm31}) yield a closed orbit. We want this closed orbit to have eigenvalues close to $1$ so as to perturb it to be nonhyperbolic. Denote $\phi_{T_0}(p)=p_0$, $T_1-T_0=T$. Suppose $L$ is a Lipschitz constant for $X$ $Y$ $T$ $\mP_\eta$ and the exponential map $exp$, consider estimations:
\begin{equation}\label{hyp01a}\begin{aligned}
&\|X(\phi_{T}(p_0)) dT(p_0,\phi)-\mP_\eta\circ X(\psi_T(p_0)) dT(p_0,\psi)\|\\
&\leq\frac{1}{2}\|X(\phi_{T}(p_0))+\mP_\eta\circ X(\psi_T(p_0))\|\cdot\|dT(p_0,\phi)-\mP_\eta\circ dT(p_0,\psi)\|\\
&+\frac{1}{2}\|X(\phi_{T}(p_0))-\mP_\eta\circ X(\psi_T(p_0))\|\cdot\|dT(p_0,\phi)+\mP_\eta\circ dT(p_0,\psi)\|
\leq2L\alpha
\end{aligned}\end{equation}
where $\mP_\eta$ is the parallel transport map between $TM_{\phi_t(p_0)}$ and $TM_{\psi_t(p_0)}$. As in lemma \ref{nonauto} $\dis(p_0,\phi_T(p_0))<\alpha$. Notice that $d_{p_0}\phi_t(p_0)=exp(\int^t_0dX(\phi_\tau(p_0))d\tau)$.
\begin{equation}\label{hyp01b}\begin{aligned}
\|\int^{T}_0dX\circ d_{p_0}\phi_t(p_0)dt-\mP_\eta\circ \int^{T}_0dX\circ d_{p_0}\psi_t(p_0)dt\|\leq TL^3\alpha
\end{aligned}\end{equation}
where $T,L<\infty$.
\begin{equation}\label{hyp01c}\begin{aligned}
&\|\mP_\eta\circ d\int^{T(p_0,\psi)}_0\rho(\dis(\psi_t(p_0),J),t)\cdot
\mT\circ (Y(\psi_t(p_0))-X(\psi_t(p_0)))dt\|\\
&\leq\|\rho(\dis(\psi_T(p_0),J),t)\cdot(Y(\psi_T(p_0))-X(\psi_T(p_0)))\cdot dT(p_0,\psi)\\
&+\int^T_0(Y(\psi_t(p_0))-X(\psi_t(p_0))\cdot\rho'\cdot\|grad\circ\dis(\cdot,\cdot)\|\cdot d_{p_0}\psi_t(p_0)dt\\
&+\int^T_0\rho\cdot d(Y(\psi_t(p_0))-X(\psi_t(p_0))\circ d_{p_0}\psi_t(p_0)dt\|
\leq L\alpha+T\alpha\frac{\rho_0}{\epsilon}L^2+L^2\alpha
\end{aligned}\end{equation}
where $\mT$ is the parallel transport map of (\ref{clm33}). Put together (\ref{hyp01a}), (\ref{hyp01b}) and (\ref{hyp01c}) yield the estimation between the linear maps:
\[
\|d\phi_{T(p_0,\phi)}(p_0)-\mP\circ d\psi_{T(p_0,\psi)}(p_0)\|\leq(2+TL^2+TL\frac{\rho_0}{\epsilon}+L+1)L\alpha
\]
which is arbitrarily small. Obviously their difference in eigenvalues restricted to $\Sigma$ is smaller than this bound. \par
Suppose $d\psi_{T(P_0,\psi)}|E^c=1-\delta$, consider a homeomorphism with parameter $A_T:\Sigma_T\to\Sigma_T$, then each $A_T$ preserves cross-section hence the return time $T(p_0,\psi)$, so that $A_{T(p_0,\psi)}\circ\psi_{T(p_0,\psi)}(p_0)\subset\Sigma_0$. Suppose that $A_T$ satisfies the following:
\begin{multline}
A_T(x)=\mI+\\
\sum_i\rho_i(\dis(x,\pi_1x),\dis(x,\pi_2x),\dots)\cdot\int^T_0\int_0^{\dis(x,\pi_i x)} d\mT_\eta \log dA_i\mT^{-1}_\eta\circ\gamma'_\eta d\eta dt
\end{multline}
where $\mI$ denotes the identity, $\mT_\eta$ $\gamma_\eta$ $\rho$ are as in theorem \ref{clm1}, $dA:T\Sigma\to T\Sigma$ is a linear map satisfying the following:
\[\begin{aligned}dA_T|E^c(\psi_T p_0)&=\frac{1}{\|d\psi_T|E^c(p_0)\|^2}d\psi_T|E^c(p_0)\\
 dA_T|T\Sigma\setminus E^c(\psi_T p_0)&=\mI
\end{aligned}\] 
now we can construct a vector field replacing $T(p_0,\psi)$ by $t$
\[\zeta'_t(x)=Z(\zeta_t x)=\frac{d}{dt}A_t(x)\circ\psi_t(x)
\]
whose return time is equal to $T(p_0,\psi)$. It is easy to verify it has eigenvalue $1$ at $p_0$
\[\|d\zeta_T(p_0,\psi)|E^c(p_0)\|=\|dA_T\circ d\psi_T|E^c(p_0)\|=1
\]
It remains to estimate the $C^r$ size between $Y$ and $Z$. Similar to (\ref{clm317})
\[\begin{aligned}
&\|\LIE^q_{h_q}(Y-Z)\|=
\|(\LIE^q_{h|\Sigma(x)}+\LIE^q_{h|Z})(Y-Z)\|\\
&<\|(\LIE^q_{h|\gamma'}+\LIE^q_{h|Z})(Y-Z)\|\\
&<\|(h|\gamma'\LIE^{q-1}_{h|\gamma'}-\LIE^{q-1}_{h|\gamma'}h|\gamma')(Y-Z)\|
+\|(h|Z\LIE^{q-1}_{h|Z}-\LIE^{q-1}_{h|Z}h|Z)(Y-Z)\|\\
&<\frac{\rho_0}{\epsilon^q}L^{q+1}\log\frac{1}{1-\delta}+L^q\frac{\delta}{1-\delta}
\end{aligned}\]
By assumption $\delta$ is arbitrarily small, hence $Z$ is arbitrarily $C^r$ close to $Y$ and to $X$, which has periodic orbit with eigenvalue $1$, so there exists a continuous band of periodic orbits. This contradict $C^r$ structural stability and prove the lemma.
\end{proof}
\begin{lem}\label{fin}
If $\dis(p,\omega)<\alpha$, then for any finite $\infty>T\geq t\geq0$,
\[\dis(\phi_tp,\phi_t\omega)\leq \mO(\alpha)
\] where $\mO$ is finite.
\end{lem}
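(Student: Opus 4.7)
This is a standard continuous-dependence-on-initial-conditions estimate, which I would establish via Gr\"onwall's inequality on the Riemannian manifold $M$. Because $X$ is smooth and the arc $\{\phi_s\omega : 0 \leq s \leq T\}$ is compact, there is a compact tubular neighborhood $K$ on which $X$ has a finite Lipschitz constant; we may take this to be the constant $L$ used throughout the paper. Choosing $\alpha$ small enough ensures that $\phi_s p \in K$ for all $0 \leq s \leq T$ as well, so both trajectories are trapped inside a region where the Lipschitz estimate applies.

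First I would compare the two flow lines through the integral identity $\phi_t(\cdot) = \phi_0(\cdot) + \int_0^t X(\phi_s(\cdot))\,ds$ expressed in normal coordinates centered along the arc $\phi_s\omega$. Subtracting the two identities and applying the Lipschitz bound for $X$ yields
\[
\dis(\phi_t p,\phi_t \omega) \leq \dis(p,\omega) + L \int_0^t \dis(\phi_s p,\phi_s \omega)\,ds.
\]
Gr\"onwall's inequality then gives $\dis(\phi_t p,\phi_t \omega) \leq \alpha\, e^{Lt} \leq \alpha\, e^{LT}$, and since $L,T < \infty$ this is $\mO(\alpha)$ with implicit constant $e^{LT}$ depending only on $X$ and $T$.

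The main technical point is passing between local coordinate norms and the intrinsic Riemannian distance $\dis$, since the integral formula for $\phi_t$ lives in a chart but the statement of the lemma is in terms of geodesic distance. I would handle this by covering $\{\phi_s\omega : 0\leq s \leq T\}$ by finitely many normal neighborhoods, on each of which coordinate norm and intrinsic distance are comparable up to a universal factor, and iterating the chartwise Gr\"onwall estimate across the finitely many overlaps via the triangle inequality. Since compactness of $[0,T]$ guarantees that only finitely many charts are needed, the resulting multiplicative factor is bounded and is absorbed into the constant hidden by $\mO$.
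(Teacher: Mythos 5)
Your proposal is correct and takes essentially the same approach as the paper: a Lipschitz estimate for $X$ fed into Gr\"onwall's inequality, yielding $\dis(\phi_t p,\phi_t\omega)\leq e^{Lt}\dis(p,\omega)\leq e^{LT}\alpha$. The paper derives the same bound via the differential form $\frac{d}{dt}\dis(\phi_t p,\phi_t\omega)\leq\|X(\phi_t p)-\mT X(\phi_t\omega)\|\leq L\,\dis(\phi_t p,\phi_t\omega)$, using parallel transport $\mT$ rather than normal coordinates to compare vectors in different tangent spaces, but this is a cosmetic difference; you are also more explicit than the paper about keeping both trajectories inside a compact tube where $L$ is valid and about reconciling chart norms with intrinsic distance, which is legitimate added care rather than a deviation.
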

\begin{proof}
Since $X\in\SVF$, $\phi_t(p)\neq\phi_t(\omega)$, and there is Lipschitz constant $L$ so that
\[\begin{aligned}
\frac{d}{dt}\dis(\phi_tp,\phi_t\omega)&\leq \|(\phi_tp-\mT\phi_t\omega)'\|=\|X(\phi_tp)-\mT X(\phi_t\omega)\|\\
&\leq L\dis(\phi_tp,\phi_t\omega)
\end{aligned}\]
Solving the differential inequality
$$\frac{d}{dt}\dis(\phi_tp,\phi_t\omega)\leq L\dis(\phi_tp,\phi_t\omega)$$
one obtains
$$\dis(\phi_tp,\phi_t\omega)\leq e^{Lt}\dis(p,\omega)\leq e^{LT}\dis(p,\omega)$$\end{proof}

The next theorem proves the necessity of condition \romannumeral 2).
\begin{thm}\label{hyp1}
If $X$ is $C^r$ structurally stable, there is a hyperbolic splitting $TM_\omega=E^s\oplus E^u$ for every $\omega\in\Omega(X)$, so that
$$\|d\phi_t|E^s(\omega)\|\leq C\lambda^t\quad t\geq0$$
$$\|d\phi_{-t}|E^u(\phi_t(\omega))\|\leq C\lambda^t\quad t\geq0$$
for some constants $C>0$, $0<\lambda<1$.
\end{thm}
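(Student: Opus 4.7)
The plan is to combine density of periodic points (corollary \ref{clm2}) with the hyperbolic tendency lemma (lemma \ref{hyp0}) and push uniform finite-time hyperbolic estimates to the limit at $\omega$. By corollary \ref{clm2} any $\omega\in\NW(X)$ is a limit of periodic points $p_i\in P(X)$ with periods $T_i$; lemma \ref{hyp0} then provides for each $p_i$ a sub-period $0\le T_0^i<T_1^i<\infty$, an invariant cross-sectional splitting $T\Sigma(p_i)=E^s(p_i)\oplus E^u(p_i)$, and a $\delta>0$ with $\|d\phi_{T_1^i}|E^s(p_i)\|\le 1-\delta$ together with the symmetric bound on $E^u$. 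The constant $\delta$ must be uniform in $i$, for otherwise a perturbation of the type used in the proof of lemma \ref{hyp0}, combined with theorem \ref{clm1}, would produce arbitrarily $C^r$-close fields carrying a periodic orbit whose Poincar\'e return map has an eigenvalue of modulus one, hence a continuous band of non-isolated periodic orbits and a failure of structural stability.

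\textbf{From sub-period to exponential rate.} For $t\ge 0$ write $t=kT_1^i+r$ with $0\le r<T_1^i$; iterating the sub-period contraction along the closed orbit gives $\|d\phi_{kT_1^i}|E^s(p_i)\|\le(1-\delta)^k$, and a Gronwall estimate of the type in lemma \ref{fin} bounds the residual factor by $e^{LT_1^i}$. Setting $\lambda=(1-\delta)^{1/\sup_iT_1^i}$ and $C=e^{L\sup_iT_1^i}$ gives $\|d\phi_t|E^s(p_i)\|\le C\lambda^t$ uniformly in $i$, provided $\sup_iT_1^i<\infty$. Boundedness of $\NW(X)$ supplies this uniform bound: the ball-covering in lemma \ref{nonauto} depends only on $\operatorname{diam}\NW(X)$ and the Lipschitz constant of $X$, so the sub-period lengths it furnishes are controlled independently of $i$.

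\textbf{Limit passage.} On any compact time window $[0,T]$, lemma \ref{fin} gives $\dis(\phi_tp_i,\phi_t\omega)\le e^{LT}\dis(p_i,\omega)$, hence $d\phi_t(p_i)\to d\phi_t(\omega)$ uniformly on compacts. Compactness of the Grassmannian of subspaces of fixed dimension over the bounded set $\NW(X)$ allows extraction of a subsequence with $E^s(p_i)\to E^s(\omega)$ and $E^u(p_i)\to E^u(\omega)$, complementary to $\langle X(\omega)\rangle$ in $TM_\omega$. Invariance under $d\phi_t$ is preserved by continuity, and the uniform exponential bounds from the previous step descend to the limit, producing the required hyperbolic splitting at $\omega$ with the constants $C,\lambda$ of the theorem.

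\textbf{Main obstacle.} The crux is the simultaneous uniformity of $\delta$ and of the sub-period length $T_1^i$ across the approximating sequence. Loss of uniform $\delta$ would introduce a center direction in the limiting splitting; loss of uniform $T_1^i$ would drive $(1-\delta)^{1/T_1^i}\to 1$ and destroy hyperbolicity altogether. Both uniformities depend essentially on the standing hypothesis that $\NW(X)$ is bounded---the first through a closing-lemma perturbation argument excluding nonhyperbolic periodic orbits arbitrarily $C^r$-close to $X$, the second through the finite ball-covering argument of lemma \ref{nonauto}---and the technical heart of the proof will be the quantitative coupling of these two finite-data estimates, since one cannot a priori choose the sub-period bound and the eigenvalue bound independently of one another when the period $T_i\to\infty$.
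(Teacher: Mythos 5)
Your proposal takes essentially the same route as the paper: density of periodic points (corollary \ref{clm2}) to approximate $\omega$, the hyperbolic tendency lemma (lemma \ref{hyp0}) to extract finite sub-periods with a contraction rate $1-\delta$, the Gronwall estimate (lemma \ref{fin}) to control the divergence of nearby orbits on finite time windows, and a limit passage of splittings. The one structural difference is the indexing: the paper chains sub-period contractions by approximating each time-shifted point $\omega_i=\phi_{\sum_{j<i}T_j}\omega$ along the orbit of $\omega$ by a separate periodic point $p_i$, whereas you iterate the sub-period contraction around a single closed orbit of $p_i$ and push the resulting exponential estimate to the limit. Both mechanisms are equivalent in effect.

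Where your write-up is noticeably sharper than the paper's is on the two uniformity requirements. The paper asserts a constant $\lambda<1-\delta$ and $C=\sup\|d\phi_{t\bmod T}(\omega)\|$ without explaining why $\delta$ can be taken independent of the index $i$ or why the sub-period lengths $T_1^i$ are uniformly bounded, yet both are essential: if $\delta_i\to 0$ the limiting splitting degenerates, and if $T_1^i\to\infty$ then $(1-\delta)^{1/T_1^i}\to 1$ and no single $\lambda<1$ works. You flag both, and you indicate the correct repair — a structural-stability contradiction for the first, and the finite ball-covering of lemma \ref{nonauto} on the bounded set $\Omega(X)$ for the second — although, as you note yourself in the final paragraph, neither of these is actually proved in full and the quantitative coupling between them would be the technical heart of a complete argument. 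In this respect your version diagnoses the gap in the paper's own terse proof more honestly than the paper does. You also silently drop a factor of $(1-\delta)^{-1}$ when passing from $(1-\delta)^k$ to $\lambda^t$ with $t=kT_1^i+r$, but that only enlarges $C$ by a fixed factor and does not affect the conclusion.
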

\begin{proof}
By corollary \ref{clm2}, for each $\phi_{\sum_{j<i}T_j}\omega=\omega_i\in\NW(X)$, there is a sequence of periodic points $p_i$, so that $p_i\to\omega_i$. By lemma \ref{hyp0} these periodic points has finite sub-periods $T_i<\infty$. Since $d\phi_T$ is continuous, by lemma \ref{fin} there is a splitting $TM_{\omega}=E^s\oplus E^u$ so that $E^s(p_i)\to E^s(\omega_i)$ and $E^u(p_i)\to E^u(\omega_i)$. And there is a constant $\lambda<1-\delta$ so that
\[
\|d\phi_{T_i}|E^s(\omega_i)\|\leq\lambda\quad\|d\phi_{-T_i}|E^u(\phi_{T_i}(\omega_i))\|\leq\lambda
\]
by continuity of $d\phi_t$, there is a constant $C=sup\|d\phi_{t\bmod T}(\omega)\|$ and $d\phi_t$ satisfies
\[\|d\phi_{t}|E^s(\omega)\|\leq C\lambda^t\quad\|d\phi_{-t}|E^u(\phi_{t}(\omega))\|\leq C\lambda^t
\]
this completes the proof.
\end{proof}

\begin{bibdiv}
\begin{biblist}
\bib{AnPo1937}{article}{
doi={10.1201/9780367813758},
 author = {A. Andronov},
author={L. Pontrjagin}
 journal = {Dokl. Akad. Nauk. SSSR},
 pages = {247--251},
 title = {Structurally Stable Systems},
 volume = {14},
 year={1937}
}

\bib{An2012}{article}{
author={D.V. Anosov},
author={E.V. Zhuzhoma},
YEAR   ={2012}
TITLE  ={Closing lemmas}
JOURNAL={Differential Equations}
PAPGES={1653-1699}
VOLUME={48}
NUMBER={13}
DOI={10.1134/S0012266112130010}}

\bib{As2016}{article}{
author  = {M. Asaoka}
author = {K. Irie}
year  = {2016}
title  = {A $C^\infty$ closing lemma for Hamiltonian diffeomorphisms of closed surfaces}
journal  = {Geometric and Functional Analysis}
volume  = {26}
abstract  = {We prove a $${C^\infty}$$closing lemma for Hamiltonian diffeomorphisms of closed surfaces. This is a consequence of a $${C^\infty}$$closing lemma for Reeb flows on closed contact three=manifolds, which was recently proved as an application of spectral invariants in embedded contact homology. A key new ingredient of this paper is an analysis of an area=preserving map near its fixed point, which is based on some classical results in Hamiltonian dynamics: existence of KAM invariant circles for elliptic fixed points, and convergence of the Birkhoff normal form for hyperbolic fixed points.}
url  = {https://doi.org/10.1007/s00039=016=0386=3}
DOI  = {10.1007/s00039=016=0386=3}
}

\bib{Fr1971}{article}{doi={10.2307/1995906},
 ISSN = {00029947},
 URL = {http://www.jstor.org/stable/1995906},
 abstract = {S. Smale has recently given sufficient conditions for a diffeomorphism to be $\Omega$-stable and conjectured the converse of his theorem. The purpose of this paper is to give some limited results in the direction of that converse. We prove that an $\Omega$-stable diffeomorphism $f$ has only hyperbolic periodic points and moreover that if $p$ is a periodic point of period $k$ then the $k$th roots of the eigenvalues of $df^k_p$ are bounded away from the unit circle. Other results concern the necessity of transversal intersection of stable and unstable manifolds for an $\Omega$-stable diffeomorphism.},
 author = {J. Franks},
 journal = {Transactions of the American Mathematical Society},
 number = {2},
 pages = {301--308},
 publisher = {American Mathematical Society},
 title = {Necessary Conditions for Stability of Diffeomorphisms},
 volume = {158},
 year = {1971}
}

\bib{Gu1983}{book}{
title={Nonlinear Oscillations, Dynamical Systems, and Bifurcations of Vector Field},
author={J. Guckenheimer},
AUTHOR={P. Holmes},
URL={https://doi.org/10.1007/978-1-4612-1140-2},
Publisher={Springer New York, NY}
}

\bib{Ha1997}{article}{
ISSN = {0003486X},
 URL = {http://www.jstor.org/stable/2951824},
 author = {S. Hayashi},
 journal = {Annals of Mathematics},
 number = {1},
 pages = {81--137},
 publisher = {Annals of Mathematics},
 title = {Connecting Invariant Manifolds and the Solution of the C1 Stability and $\Omega$-Stability Conjectures for Flows},
 volume = {145},
 year={1997}
}

\bib{LaPa1986}{article}{author={R. Labarca},author= {M.J. Pacifico},
title={Stability of singular horseshoes},
journal={Topology},
Volume={25}, number={3},
year={1986},
Pages={337-352},
ISSN={0040-9383},
doi={10.1016/0040-9383(86)90048-0.}
url={https://www.sciencedirect.com/science/article/pii/0040938386900480}}

\bib{Ma1982}{article}{doi={10.2307/2007021},
 ISSN = {0003486X},
 URL = {http://www.jstor.org/stable/2007021},
 author = {R. Ma{\~{n}}{\'e}},
 journal = {Annals of Mathematics},
 number = {3},
 pages = {503--540},
 publisher = {Annals of Mathematics},
 title = {An Ergodic Closing Lemma},
 volume = {116},
 year = {1982}
}

\bib{Ma1987}{article}{
author={R. Ma{\~{n}}{\'e}},
title={A proof of the $C^1$ stability conjecture},
journal={Publications Math{\'e}matiques de l'Institut des Hautes {\'E}tudes Scientifiques},
year={1987},
month={Dec},
day={01},
volume={66},
number={1},
pages={161-210},
issn={1618-1913},
doi={10.1007/BF02698931},
url={https://doi.org/10.1007/BF02698931}
}

\bib{PaSm1970}{article}{
 author = {J. Palis},
author={S. Smale}
journal = {Global analysis--Proceedings of Symposia in Pure Mathematics},
 pages = {223--231},
 publisher = {AMS},
 title = {Structual Stability Theorems},
 volume = {14},
 year = {1970},
doi={10.1090/pspum/014}
}

\bib{Pe1962}{article}{author={M.M. Peixoto},
title={Structural stability on two-dimensional manifolds},
journal={Topology},
Volume={1}, number={2},
year={1962},
Pages={101-120},
ISSN={0040-9383},
doi={10.1016/0040-9383(65)90018-2},
url={https://www.sciencedirect.com/science/article/pii/0040938365900182}}

\bib{Pu1967}{article}{
 ISSN = {00029327, 10806377},
 URL = {http://www.jstor.org/stable/2373413},
 author = {C. C. Pugh},
 journal = {American Journal of Mathematics},
 number = {4},
 pages = {956--1009},
 publisher = {Johns Hopkins University Press},
 title = {The Closing Lemma},
 volume = {89},
 year = {1967}
}

\bib{Pu1967im}{article}{
 ISSN = {00029327, 10806377},
 URL = {http://www.jstor.org/stable/2373414},
 author = {C. C. Pugh},
 journal = {American Journal of Mathematics},
 number = {4},
 pages = {1010--1021},
 publisher = {Johns Hopkins University Press},
 title = {An Improved Closing Lemma and a General Density Theorem},
 volume = {89},
 year = {1967}
}

\bib{Ro1971}{article}{
author={J.W. Robbin},
year= {1971},
title={A Structural Stability Theorem},
journal= {Annals of Mathematics}, volume={94},number={3},page={447–493},doi= {10.2307/1970766}}

\bib{Rs1973}{article}{author={C. Robinson},
title={$C^r$ Structural Stability Implies Kupka-Smale},
journal={Dynamical Systems},
publisher={Academic Press},
year={1973},
Pages={443-449},
ISBN={9780125503501},
doi={10.1016/B978-0-12-550350-1.50036-9.}
url={https://www.sciencedirect.com/science/article/pii/B9780125503501500369}}

\bib{Rs1976}{article}{author={C. Robinson},
title={Structural stability of C1 diffeomorphisms},
journal={Journal of Differential Equations},
Volume={22}, number={1},
year={1976},
Pages={28-73},
ISSN={0022-0396},
doi={10.1016/0022-0396(76)90004-8},
url={https://www.sciencedirect.com/science/article/pii/0022039676900048}}

\bib{Sm1966}{article}{doi={10.2307/2373203},
 ISSN = {00029327, 10806377},
 URL = {http://www.jstor.org/stable/2373203},
 author = {S. Smale},
 journal = {American Journal of Mathematics},
 number = {2},
 pages = {491--496},
 publisher = {Johns Hopkins University Press},
 title = {Structurally Stable Systems are not Dense},
 urldate = {2022-06-08},
 volume = {88},
 year = {1966}
}

\bib{Sm1967}{article}{
 author = {S. Smale},
 journal = {Bulletin of the American Mathematical Society},
 
 pages = {747--817},
 publisher = {AMS},
 title = {Differentiable Dynamical systems},
 volume = {73},
 year = {1967},
doi={10.1090/S0002-9904-1967-11798-1}
}

\bib{Sm1999}{article}{
author = {S. Smale}
year = {1998}
title =   {Mathematical problems for the next century}
journal =  {The Mathematical Intelligencer}
volume =  {20}
URL = {https://doi.org/10.1007/BF03025291}
DOI = {10.1007/BF03025291}
}

\bib{Sp1960}{article}{doi={10.2307/2308924},
 ISSN = {00029890, 19300972},
 URL = {http://www.jstor.org/stable/2308924},
 author = {A. Spitzbart},
 journal = {The American Mathematical Monthly},
 number = {1},
 pages = {42--46},
 publisher = {Mathematical Association of America},
 title = {A Generalization of Hermite's Interpolation Formula},
 volume = {67},
 year = {1960}
}

\bib{Tu2002}{article}{
AUTHOR  = {W. Tucker},
YEAR  = {2002},
TITLE  = {A Rigorous ODE Solver and Smale’s 14th Problem},
JOURNAL  = {Foundations of Computational Mathematics},
PAGES = {53--117},
VOLUME  = {2},
abstract  = {We present an algorithm for computing rigorous solutions to a large class of ordinary differential equations. The main algorithm is based on a partitioning process and the use of interval arithmetic with directed rounding. As an application, we prove that the Lorenz equations support a strange attractor, as conjectured by Edward Lorenz in 1963. This conjecture was recently listed by Steven Smale as one of several challenging problems for the twenty-first century. We also prove that the attractor is robust, i.e., it persists under small perturbations of the coefficients in the underlying differential equations. Furthermore, the flow of the equations admits a unique SRB measure, whose support coincides with the attractor. The proof is based on a combination of normal form theory and rigorous computations.},
URL  = {https://doi.org/10.1007/s002080010018},
DOI  = {10.1007/s002080010018},
}

\bib{Wen1995}{article}{author={Lan Wen},
title={On the $C^1$ Stability Conjecture for Flows},
journal={Journal of Differential Equations},
Volume={129}, number={2},
year={1996},
Pages={334-357},
ISSN={0022-0396},
doi={10.1006/jdeq.1996.0121.}
url={https://www.sciencedirect.com/science/article/pii/S0022039696901217}}

\end{biblist}
\end{bibdiv}

\end{document}